\newcommand{\ZZhat}{\widehat{\mathbb{Z}}}
\newcommand{\taylor}[1]{{\color{blue} \sf $\spadesuit\spadesuit\spadesuit$ Taylor: [#1]}}
\newcommand{\C}{{\mathbb C}}
\newcommand{\F}{{\mathbb F}}
\newcommand{\PP}{{\mathbb P}}
\newcommand{\Q}{{\mathbb Q}}
\newcommand{\Z}{{\mathbb Z}}
\newcommand{\Qbar}{{\overline{\Q}}}
\newcommand{\Kbar}{{\overline{K}}}
\newcommand{\Fbar}{{\overline{F}}}
\newcommand\td\widetilde
\newcommand{\FF}{{\mathcal F}}
\newcommand{\OO}{{\mathcal O}}
\def\presuper#1#2%
\def\Q{\mathbb{Q}}
\def\C{\mathbb{C}}
\def\P{\mathbb{P}}
\def\Z{\mathbb{Z}}
\def\<{\ensuremath{\langle}}
\def\>{\ensuremath{\rangle}}
 \DeclareMathOperator{\Aut}{Aut}
\DeclareMathOperator{\ord}{ord} 
\DeclareMathOperator{\Div}{Div}
\DeclareMathOperator{\Spec}{Spec}
\newcommand{\GL}{\operatorname{GL}}
\newcommand{\SL}{\operatorname{SL}}
\newcommand{\PSL}{\operatorname{PSL}}
\newcommand\djunion\amalg
\numberwithin{equation}{section}
\newtheorem{theorem}{Theorem}[subsection]
\newtheorem{lemma}[theorem]{Lemma}
\newtheorem{mochizuki}[theorem]{Mochizuki's Inequality}
\theoremstyle{definition}
\newtheorem{definition}[theorem]{Definition}
\newtheorem{example}[theorem]{Example}
\theoremstyle{remark}
\newtheorem{remark}[theorem]{Remark}
\newcommand{\QQ}{\mathbb{Q}}
\newcommand{\ZZ}{\mathbb{Z}}
\newcommand{\RR}{\mathbb{R}}
\renewcommand{\FF}{\mathbb{F}}
\newcommand{\QQbar}{\overline{\mathbb{Q}}}
\newcommand{\hull}{\operatorname{hull}}
\newcommand{\vu}{\underline{v}}
\newcommand{\Vu}{\underline{V}}
\renewcommand{\L}{\mathbb{L}}
\newcommand{\lgp}{\operatorname{lgp}}
\newcommand{\deghatu}{\underline{\widehat{\deg}}}
\newcommand{\bad}{\operatorname{bad}}
\newcommand{\Cu}{\underline{C}}
\newcommand{\Eu}{\underline{E}}
\newcommand{\Xu}{\underline{X}}
\newcommand{\Mu}{\underline{M}}
\newcommand{\epsu}{\underline{\epsilon}}
\newcommand*{\da@rightarrow}{\mathchar"0\hexnumber@\symAMSa 4B }
\newcommand*{\da@leftarrow}{\mathchar"0\hexnumber@\symAMSa 4C }
\newcommand*{\xdashrightarrow}[2][]{%
	\mathrel{%
		\mathpalette{\da@xarrow{#1}{#2}{}\da@rightarrow{\,}{}}{}%
	}%
}
\newcommand{\xdashleftarrow}[2][]{%
	\mathrel{%
		\mathpalette{\da@xarrow{#1}{#2}\da@leftarrow{}{}{\,}}{}%
	}%
}
\newcommand*{\da@xarrow}[7]{%
	\sbox0{$\ifx#7\scriptstyle\scriptscriptstyle\else\scriptstyle\fi#5#1#6\m@th$}%
	\sbox2{$\ifx#7\scriptstyle\scriptscriptstyle\else\scriptstyle\fi#5#2#6\m@th$}%
	\sbox4{$#7\dabar@\m@th$}%
	\dimen@=\wd0 %
	\ifdim\wd2 >\dimen@
	\dimen@=\wd2 %
	\fi
	\count@=2 %
	\def\da@bars{\dabar@\dabar@}%
	\@whiledim\count@\wd4<\dimen@\do{%
		\advance\count@\@ne
		\expandafter\def\expandafter\da@bars\expandafter{%
			\da@bars
			\dabar@ 
		}%
	}%
	\mathrel{#3}%
	\mathrel{%
		\mathop{\da@bars}\limits
		\ifx\\#1\\%
		\else
		_{\copy0}%
		\fi
		\ifx\\#2\\%
		\else
		^{\copy2}%
		\fi
	}%
	\mathrel{#4}%
}
\newcommand\subsetsim{\mathrel{%
			\ooalign{\raise0.2ex\hbox{$\subset$}\cr\hidewidth\raise-0.8ex\hbox{\scalebox{0.9}{$\sim$}}\hidewidth\cr}}}
\newcommand{\Lbar}{\overline{L}}
\newcommand{\chr}{\operatorname{char}}
\newcommand{\lognubar}{\overline{\ln\nu}}
\renewcommand{\mod}{\operatorname{mod}}
\renewcommand{\FF}{\mathbb{F}}
\newcommand{\Comm}{\operatorname{Comm}}
\newcommand{\can}{\operatorname{can}}
\renewcommand{\int}{\operatorname{int}}
\newcommand{\elink}[1]{\href{https://www.lmfdb.org/EllipticCurve/Q/#1}{\textsf{#1}}}
\begin{document}

\title{The Statement of Mochizuki's Corollary 3.12: Initial Theta Data 
}
\author{Taylor Dupuy and Anton Hilado}


\date{\today}
\thanks{}

\begin{abstract}
We show that Mochizuki's initial theta data is computable from an elliptic curve defined over $\QQ$. 
We work out the case of initial theta data for the elliptic curve with Cremona label \textsf{11a1} in detail.
\end{abstract}

\setcounter{tocdepth}{1}
\maketitle
\tableofcontents

\section{Introduction}\label{S:introduction}

Surrounding Mochizuki's work on ABC conjecture is a notion of \emph{initial theta data}. 
Initial theta data consists of an elliptic curve with some auxiliary data that forms the hypothesis of Corollary 3.12 of \cite{IUT3} --- Mochizuki's famous and controversial inequality (see statement~\ref{C:cor3p12} below).

The present paper is about explaining what Mochizuki's initial theta data is and how to construct examples of it starting with an elliptic curve over $\QQ$. 
This is a necessary prerequisite to understanding the statement of Mochizuki's Corollary 3.12. 

Part of the difficulty of parsing this definition is its length and large number of auxillary constructions. 
Our contribution is to break this definition down into digestible pieces and provide concrete examples.

We begin by stating a version Mochizuki's inequality.

\begin{mochizuki}[Mochizuki's Inequality {\cite[Corollary 3.12]{IUT3}}]\label{C:cor3p12}
	For an elliptic curve $E$ over a number field $F$ sitting in initial theta data $$(\Fbar/F, E_F, l, \Mu, \Vu, V^{\bad}_{\mod}, \epsu)$$ 
	the following inequality holds
	\begin{equation}\label{E:cor3p12}
	-\deghatu(P_q) \leq -\deghatu_{\lgp}(P_{\hull{(U_{\Theta})}}).
	\end{equation}
\end{mochizuki}
The left hand side of \eqref{E:cor3p12} is the degree $\deghatu$ of an Arakelov divisor $P_q \in \widehat{\Div}(F)$ normalized by $[F:\QQ]$ associated to the minimal discriminant of an elliptic curve $\Delta^{\min}_{E/F}$ and is, up to a multiple of $2\ell$, for some auxillary prime number $\ell$, equal to the left hand side of the additive statement of Szpiro's inequality, $\log \vert \Delta^{\min}_{E/F}\vert $. The right hand side is a Mochizuki-log-volume $\lognubar_{\L}(\hull(U_{\Theta}))$ corresponding to the hull of the multiradial representation of the theta-pilot object. 
It is major construction of Mochizuki's theory which is described in \cite[Theorem 3.11]{IUT3}.
We will not address this complicated construction in this paper.

Equation \eqref{E:cor3p12} is a compact version of what is stated in the PRIMS volume. 
Mochizuki's Corollary 3.12 actually spans two pages of the PRIMS volume \cite{IUT3} (pages 597--598)  and last sentence of it corresponds to \eqref{E:cor3p12}.
In the Corollary 3.12 environment of \cite{IUT3} the hypotheses of the Theorem 3.11 environment of \cite{IUT3} are assumed. 
The theorem environment for Theorem 3.11 runs seven pages of the PRIMS volume (page 573 to page 580).
This then in turn depends on the official definition environment given for initial theta data which in the PRIMS volume is Definition 3.1 of Mochizuki's first paper \cite{IUT1} which is the subject of the present paper. 
As stated previously, it is complicated. It spans three pages starting on 68 and ending on page 71. This key definition that is a major obstruction for experts (see for example the discussion  \href{https://mathoverflow.net/questions/299845/explicit-example-of-elliptic-curve-of-the-kind-needed-for-iutt}{MathOverflow 299845}). 

We will now say a few words about how one gets from Mochizuki’s inequality
for elliptic curves in general position to the full ABC conjecture.
This happens in a series of Lemmas embedded in proof environment of \cite[Corollary 2.2]{IUT4}.\footnote{We count approximately nine statements which could be broken into sublemmas. The statement environment of which runs from pg 671 to pg 674 and the proof runs from pg 674 to pg 679.} 
The claim is that Mochizuki's Corollary 3.12, which holds for elliptic curves in initial theta data, implies Vojta's conjecture on a compactly bounded subset $\mathcal{K} \subset U(\overline{\QQ})$ where $U=\PP^1\setminus \lbrace 0,1,\infty\rbrace$.
The terminology ``compactly bounded subset'' was introduced in Mochizuki's \emph{Arithmetic Elliptic Curves in General Position} paper \cite{Mochizuki2010}. 
This specific case of Vojta's inequality for curves implies the general case of Vojta's inequality for curves. 
We will not review \cite{Mochizuki2010} here and refer to the excellent exposition \cite{Matthes2013} but just state that \cite{Mochizuki2010} relies on the result of Mochizuki's \emph{Noncritical Beyli Maps} paper \cite{Mochizuki2004}.

The fact that Vojta's conjecture implies the full ABC conjecture can be found in Bombieri-Gubler \cite[Theorem 14.4.16]{Bombieri2006}. See also Frankenhuysen \cite{VanFrankenhuysen2002}.

The basic idea of \cite[Corollary 2.2]{IUT4}, is to realize $U$ as a parameter space for elliptic curves of the form $y^2 = x(x-1)(x-\lambda)$ then convert the Arakelov degrees in \eqref{E:cor3p12} into heights and the Szpiro-like upper bounds given in \cite[Theorem 1.10]{IUT4} to the log-conductor and log-different terms that appear in Vojta's inequality.
The main claim then is that for each $\mathcal{K}$ of a certain type there exists a finite subset $S \subset \mathcal{K}$ such that for every $\lambda \in \mathcal{K}\setminus S$ of bounded degree $d$, every $\mathcal{E}_{\lambda}$ given by $y^2 = x(x-1)(x-\lambda)$ can be put into initial theta data in such a way that the estimates of \cite[Corollary 3.12]{IUT3} apply with uniform parameters.
This placement into initial theta data then implies Vojta's conjecture a compactly bounded subset of $U$ (and hence ABC).

This present paper explains what initial theta data actually is and gives examples. 
In \S\ref{S:initial-theta-data} we break down this definition into pieces and show that constructing initial theta data starting from an elliptic curve over $\QQ$ is computable.
In particular we prove the following.
\begin{theorem}
	\begin{enumerate}
	\item There exists an algorithm for generating initial theta data ``built from the Field of moduli'' from databases of elliptic curves over $\QQ$. (pseudo-code in \S\ref{S:algorithm})
	\item The tuple $(\Fbar/F, E_F, l, \Mu, \Vu, V^{\bad}_{\mod}, \epsu)$ described in \S\ref{S:e11a1} gives initial theta data for the elliptic curve with Cremona label \textsf{E11a1}.
	\end{enumerate}
\end{theorem}
Besides breaking down the two page definition and removing auxillary constructions, a simplification is the replacement of $\Cu_K$ with $\Mu$ which allows us to relating initial theta data to the theory of the classical modular curve $X_{0}(l)$ .
This viewpoint eliminates the use of the algebraic stack $\underline{C}_K$ and makes the connection to Mochizuki's original idea of a ``global multiplicative subspace'' more transparent. 
We have been extremely careful with the  bookkeeping due to historical confusion and hope that it provides a stable reference for this material. 
In particular we give many definitions and prove an existence theorem for the ``fake global multiplicative subspace'' which uses the classical theory of transvections. 
Later in the verification of a ``non-arithmeticity'' condition we make essential use of a Theorem of Sijsling which characterizes $j$-invariants of elliptic curves we need to exclude \cite{Sijsling2019}. 

\section{Global Multiplicative Subspaces, Initial Theta Data, and E11a1}\label{S:initial-theta-data}
In this section we explain the meaning of the ``Initial Theta Data'' parameters
 $$(F, l, E, S, \Vu, \Mu, \epsu),$$
appearing in Mochizuki's theory.
At a 0th order approximation, these are just constants satisfying enough hypotheses so that a zoo of anabelian interpretations apply simultaneously. 
Our exposition breaks the tuple $(F, l, E, S, \Vu, \Mu, \epsu)$ into two parts: $(F, l, E, S)$ and $(\Vu, \Mu, \epsu)$.

The first chunk, $(F, l, E, S)$, which we call ``pre theta data''  consists of an elliptic curve $E$ over a field $F$ together with a rational prime $l$ and a collection of places of bad multiplicative reduction $S$. 
These parameters are then asked to satisfy some conditions related to the $l$-torsion that allow Mochizuki's anabelian machine to run. 
These should be thought of ``non-isotriviality conditions over the field with one element''.

The second part of the tuple, $(\Vu,\Mu,\epsu)$, serves as a simulation for a ``global multiplicative subspace'' and ``global canonical generator'' (the precise definitions are given in \S\ref{S:transvections}).
For Mochizuki, these serve as a fix to a failure of his approach to the ABC conjecture in his Hodge-Arakelov paper \cite{Mochizuki1999}. 
There, his inequalities suffered from so-called ``Gaussian poles'' rendering them useless.

Finally, after the definition of initial theta data is given, we construct an explicit tuple $(F, l, E, S, \Vu, \Mu, \epsu)$ where $E$ is a base change of the elliptic curve over $\Q$ with Cremona label E11a1.

\subsection{Notation For Galois Actions on Torsion Points of Abelian Varieties}
Throughout this paper if $L$ is a field then $\overline{L} \supset L$ will denote an algebraic closure and we will let  $G_L = G(\overline{L}/L)$ will denote its absolute Galois group. 

For an abelian variety $A$ and an integer $m$ we let $A[m]$ denote its group-scheme of $m$-torsion points. 
Suppose now that $A$ is defined over a field $L$. 
We may abusively write $A[m]$ for the $G_L$-module $A[m](\overline{L})$. 
We will let $\rho_m: G_L \to \Aut(A[m])$ denote the natural $G_L$-representation.
For $m$ coprime to the characteristic of $L$ we refer to $\rho_m$ as the \emph{mod $m$ Galois representation} and we have $A[m]\cong (\Z/m)^{2g}$ as abelian groups.

\subsection{Fricke Involutions}\label{S:fricke-covers}
In the process of unpacking Mochizuki's ``initial theta data'' we make use of the classical theory of the Fricke involution, which we now recall.
Let $Y_0(l)$ be the moduli stack\footnote{
	We note that pairs $(E,M)$ consisting of an elliptic curve together with an $l$-torsion subgroup have an involution which implies the moduli problem is not fine. 
	One can define a course space as a quotient of $Y_1(N)$ which is fine. 
	See for example \cite[\S7.1]{Parson2003}.
} whose $S$-points are elliptic curves with cyclic subgroup of order $l$; by ``cyclic subgroup of order $l$" we mean a finite flat subgroup scheme over $S$ that \'etale locally is isomorphic the constant group scheme $\ZZ/l\ZZ$.
If $S=\Spec(L)$ is the spectrum of a number field then the cyclic subgroup $M \subset E$ in the moduli problem is equivalent to the cyclic subgroup $M(\Lbar) \subset E[l](\Lbar)$ of order $l$ which is stable under $G(\Lbar/L)$. 
\begin{definition}
	The \emph{Fricke involution} is the endomorphism $Y_0(l) \to Y_0(l)$ is given by 
	$$(E,M) \mapsto (\Eu,\Mu):=(E/M, E[l]/M). $$
\end{definition}
We note that $E$ and $\Eu$ are connected by an isogeny
 $$f:E \to \Eu:=E/M$$
given by modding out by the cyclic subgroup $M \subset E$. 
We will call this isogeny the \emph{Fricke isogeny} or \emph{Fricke cover}.
Note that $\Eu/\Mu = (E/M)/(E[l]/M) = E/E[l]=E$ as the cokernel of multiplication by $l$ is $E$; 
one can also see that $\Eu[l]/\Mu\cong M$ under these identifications. 
This show that the Fricke involution is indeed an involution.

Finally, we remark that in the stable compactification $X_0(l)$ of $Y_0(l)$ there are exactly two cusps. 
The first of these is a nodal cubic and the second of these is a chain of $l$ copies of $\P^1$'s in a loop.
In the extension of the Fricke involution to the compactification, these two curves in involution.

\subsection{Arithmetic Fuchsian Groups and Arithmetic Riemann Surfaces}
For two subgroups $\Gamma_1$ and $\Gamma_2$ of $\PSL_2(\RR)$ we say $\Gamma_1$ and $\Gamma_2$ are \emph{isogenous} (=\emph{commensurable}) and write $\Gamma_1 \sim \Gamma_2$ if and only if $[\Gamma_1:\Gamma_1\cap \Gamma_2]<\infty$ and $[\Gamma_2: \Gamma_1 \cap \Gamma_2]< \infty$
. 
Also for a subgroup $\Gamma_0 \subset \PSL_2(\RR)$ we write $\Comm(\Gamma_0) = \lbrace \gamma \in \PSL_2(\RR): \gamma \Gamma_0 \gamma^{-1} \sim \Gamma_0 \rbrace$ for the \emph{commensurator}.

\begin{definition}
	A  Fuchsian group $\Gamma \subset \PSL_2(\RR)$ is called \emph{arithmetic} if and only if 
	$[\Comm(\Gamma):\Gamma]=\infty$. 
\end{definition}

\begin{remark} In \S 2 of \cite{Mochizuki1998} Mochizuki proved that two other notions are equivalent to being arithmetic. Let $\Gamma \subset \PSL_2(\RR)$  be Fuchsian of the first kind (this means the associated fundamental domain has finite volume).
 The following are equivalent.
\begin{enumerate}
	\item $\Gamma$ is arithmetic.
	\item $\Gamma$ is \emph{Shimura arithmetic}: there exists some  $\OO \subset B$ an order in an indefinite quaternion algebra over a totally real field (which we regard as embedded in $M_2(\RR)$) such that $\overline{\OO \cap \SL_2(\RR)} \sim \Gamma.$
	Here the overline denotes the image of this group in $\PSL_2(\RR)$.
	\item $\Gamma$ is \emph{Margulis arithmetic}: \cite[Definition 2.2]{Mochizuki1998}
\end{enumerate}
\end{remark}

We now give some examples of arithmetic Fuchsian groups. Before proceeding we recall briefly that a group $\Gamma$ is \emph{congruence} if and only if it contains $\Gamma(n)$ for some $n$ where $\Gamma(n)$ is the kernel of the reduction map $\PSL_2(\ZZ) \to \PSL_2(\ZZ/n)$.
A group is \emph{non-congruence} if and only if it is finite index in $\PSL_2(\ZZ)$ and not congruence.
\begin{example}\label{E:E11a1}
	All congruence and non-congruence subgroups are arithmetic. 
	The congruence groups include $\Gamma_1(n)$ and $\Gamma_0(n)$.
	Arithmeticity follows from the fact that they are finite index inside $\PSL_2(\ZZ)$ and the behavior of intersections of finite index subgroups.
\end{example}

Let $Y$ be a pointed hyperbolic curve over $\C$ (viewed as a Riemann surface).
Let $\widetilde{Y}=\mathcal{H}$ be its universal cover and let $\Gamma_Y$ be the image of $\pi_1(Y) \to \Aut(\mathcal{H}) = \PSL_2(\RR)$. 

\begin{definition}
	We say that $Y$ is an \emph{arithmetic Riemann surface} if and only if $\Gamma_Y$ is an arithmetic Fuchsian group.
\end{definition}

In what follows, for a Fuchsian group $\Gamma$ we will let $\Gamma \backslash \mathcal{H}$ denote the Riemann surface quotient and we will let $[\Gamma \backslash \mathcal{H}]$ denote the analytic stack (complex orbifold). 
We recall that the coarse space of $[ \Gamma \backslash \mathcal{H}]$ is $\Gamma \backslash \mathcal{H}$. 
For further references about complex analytic stacks and uniformization we refer the reader to \cite[Chapter 6]{VZB}.
\begin{example}
	Let $E$ be the elliptic curve with Cremona label \elink{11a1}, viewed as a Riemann surface.
	Let $X = E \setminus o$ where $o$ is the origin. 
	We have that $E = X_0(11)(\C):= \Gamma_0(11)\backslash\mathcal{H^*}$ where $\mathcal{H}^*$ is the extended upper half plane. 
	One is tempted now to think that $X$ is arithmetic, but this is not the case.
	We give two reasons. 
	First, $Y_0(11)(\C)= \Gamma_0(11)\backslash \mathcal{H}$ is missing two points at infinity. 
	This means one can not compactify by a single point.
	Second, $\Gamma_0(11)$ has elliptic points. 
	We are tempted to say that $\pi_1(X) \cong \Gamma_0(11)$ but this is not the case. We have $\pi_1(X) \cong \mathbb{Z}*\mathbb{Z} $ is the free (non-abelian) group on two generators (since $X$ is homotopic to the wedge of two cirlces). 
	This implies that $\Gamma_X$ must be torsion free and we know that $\Gamma_0(11)$ has elliptic points. 
	We \emph{do} however have that $\pi_1([\Gamma_0(11)\backslash\mathcal{H}]) \cong \Gamma_0(11).$
\end{example}

Arithmetic Riemann surfaces are rare. 
A result of \cite{Takeuchi1983} cited at the end of \S2 of \cite{Mochizuki1998} states that for each $(g,r)$ there are only finitely many hyperbolic arithmetic Riemann surfaces of type $(g,r)$. 
In fact, building on the work of Takeuchi, Sijsling in \cite{Sijsling2019} gives a complete table of punctured elliptic curves whose analytifications are arithmetic Riemann surfaces (and also gives models of these curves over $\QQ$!).

\begin{theorem}[{\cite[Table 4]{Sijsling2019}} Sijsling's Invariants] \label{T:sijsling}
	Let $E$ be an elliptic curve viewed as a complex Riemann surface. Let $o\in E$ be its origin. 
	The Riemann surface $X=E-o$ is arithmetic if and only if 
	 $$j_E  \in \lbrace 0, \quad 1728,\quad 2^{14}31^{3}/5^{3},\quad
	 2^{2}73^3/3^4 \rbrace $$
\end{theorem}

Interestingly, all of these curves have models over $\QQ$.
The $\QQ$ models of the compatified Riemann surfaces  that are not CM (i.e. when $j\neq 0,1728$) are the two curves \elink{20.a1}, \elink{24.a3}. 
In \cite{Sijsling2019} he shows the punctured variants of \elink{20.a1}, \elink{24.a3} are both  arithmetic Riemann surfaces of non-congruence type.

\subsection{Initial Theta Data (Long Form Part 1)}
For $L/\Q_p$ be a finite extension with uniformizer $\pi$ we will be using \emph{normalized valuations} on $K$ we mean the valuation $\ord_L$ satisfying $\ord_L(\pi)=1$. 

Initial Theta Data will consist of a large tuple 
 $$ (F, l, E, S, \Vu, \Mu, \epsu).$$
We will break this tuple into two parts.
First, fix the following.
\begin{itemize}
	\item Let $F$ be a number field with algebraic closure $\Fbar$, and $l$ a rational prime.
	\item Let $E$ be an elliptic curve over $F$.\footnote{Mochizuki uses $X_F = X = E\setminus o$ the punctured elliptic curve.
	Since elliptic curves have identities this data is implicit in $E$.}
	\item Let $K = F(E[l])$.\footnote{Which is also $\cong\Fbar^{\ker(\rho_l)}$.}
	\item Fix $(E,M) \in Y_0(l)(F)$ and $\lbrace\epsu,-\epsu\rbrace \in \Eu[l](\Kbar)/\pm1$.
\end{itemize}

We now give conditions of the first part of the theta data.
\begin{definition}
	We say that $(F,l,E,S)$ is \emph{pre theta data} if and only if 
	\begin{enumerate}
		\item (Non-arithmetic) \label{I:shimura} $E_{\C}\setminus o$ is not an arithmetic Riemann surface. Here $o$ is the identity of the elliptic curve.
		\item \label{I:galois} (Non-isotrivial) $F/\Q(j_E)$ is Galois and $\rho_l(G_F) \subset \Aut(E[l](\Fbar))$ contains a subgroup isomorphic to $\SL_2(\F_l)$. 
		\item (Torsion Conditions) \label{I:cond-2} $\sqrt{-1}\in F$ and $E(F) \supset E(\Fbar)[30]$
		\item (Places of Multiplicative Reduction) $S \subset V(\Q(j_E))_0$ is a non-empty set of places such that every $v \in S$ has odd residue characteristic and every model $E_{\Q(j_E)}$ of $E$ over $\Q(j_E)$ has multiplicative reduction at $v$.  
		\item (Congruence Conditions) 
		Let $E_{\Q(j_E)}$ be a model over the field of moduli. 
		Let $S \subset V(\Q(j_E))$ be an admissible collection of bad places as above. 
		For $l$ the prime above, and all $v \in S$ we require
		\begin{enumerate}
			\item $l \nmid \chr(v)$ for all $v \in S$,
			\item $l \nmid \ord_v(q_v)$,
			\item $l\geq 5$,
			\item $l \nmid [F:\Q(j_E)]$.
		\end{enumerate}
	\end{enumerate}
\end{definition}
We give a couple remarks to aid the reader.
\begin{remark}
	\begin{enumerate}
		\item If we follow \cite[Definition 3.1]{IUT1} directly, condition \ref{I:cond-2} should use 6-torsion rather than 30-torsion. 
		Since \cite{IUT4} later imposes a condition on 30-torsion, we have decided to impose this condition presently.
		\item Condition \ref{I:galois} precludes elliptic curves having CM geometrically (as Galois representations of such curves have abelian image).
		\item This data is free of conditions on $\Vu$. 
		This means at this point we are free to choose $\Vu$ to be whatever.  
	\end{enumerate}
\end{remark}

In order to formulate the conditions on the remaining part of the tuple $(\Vu,\Mu,\epsu)$ we recall some facts about torsion in the Tate uniformization. 

\subsection{The Tate Uniformization}
For $L/\Q_p$ be a finite extension with uniformizer $\pi$ we continue using normalized valuations on $L$ so that $\ord_L(\pi)=1$. 

Recall that for all elliptic curves $A$ over a field $L$, a finite extension of $\Q_p$, with $\vert j_A \vert_L >1$ we have an isomorphism $A(\overline{L}) \cong E_q(\overline{L})$ where $E_q$ is the Tate curve for some unique $q\in \overline{L}$ called the Tate parameter (a geometric invariant which can be written as a formal power series in $1/j_E$ with integer coefficients).
These parameters are important to us because of their relation to the minimal discriminant: $\ord_L(q) = \ord_L(\Delta_{E_q})$ and each $E_q$ is a minimal Weierstrass model. 
All of this is in \cite[Ch V]{Silverman2013}. See also \cite[\S3.5]{Dupuy2020b}.
Later in \S5 will we make use of the rigid analytic uniformizations $\phi: \overline{L}^{\times}/q^{\Z} \to E_q(\overline{L})$ to study torsion points.

\subsection{Notation For Places and Completions }
Throughout this paper if $L$ is a number field we will let $V(L)$ denote the collection of places of $L$. 
We let $V(L)_0$ denote the finite places and $V(L)_{\infty}$ denote the infinite places. 
If $f^*:L_0 \to L$ is an inclusion of fields there is a natural map $f:V(L) \to V(L_0)$. 
If $v_0 \in V(L_0)$ and $f^*$ as above we will use the notation $V(L)_{v_0} = \lbrace v\in V(L): f(v)=v_0\rbrace$.

For $w \in V(L)$ we let $L_w$ denote the completion of $L$ with respect to this place. 
We will let $G_w = G_{L_w}$ and often make a choices of embeddings $\overline{L} \subset \overline{L_w}$ so that $G_{w} \subset G_L$.

\subsection{Multiplicative Subspaces and Canonical Generators}\label{S:transvections}

The following ``canonical splitting'' of the torsion representation for Tate curves is needed both to make definitions and for explicit computations of initial theta data in \S\ref{S:e11a1}.

The point here is we can find a unipotent matrix in the mod $\ell$ representation of the Tate curve. That is there exists some $P_1,P_2 \in E[\ell]$ and $\sigma\in G_L$ such that 
$$\rho_l(\sigma) = \left[ \begin{matrix} 1 & 1 \\ 
0 & 1 \end{matrix} \right], $$
with respect to the basis 
$$P_1 \leftrightarrow \left [ \begin{matrix} 1 \\ 0 \end{matrix} \right] , P_2 \leftrightarrow \left [ \begin{matrix} 0 \\ 1 \end{matrix} \right].$$

\begin{definition}
	An element $\sigma$ together with a choice of basis $P_1$ and $P_2$ is called a \emph{transvection}.
\end{definition}

\begin{lemma}[{\cite[Chapter V]{Silverman2013}}]
	Let $L/\Q_p$ be a $p$-adic field. 
	Let $\ord_L$ be its normalized valuation.
	Let $E/L$ be an elliptic curve with $\ord_L(j_E)<0$ (so that it does not have potentially good reduction).
	Let $l\geq 3$ be a prime not dividing $\ord_L(j_E)$.
	There exists some $\sigma \in I_{\Lbar/L}$ and generators $P_1,P_2 \in E[l](\Lbar)$ such that 
	 $$ \sigma(P_1) = P_1, $$
	 $$ \sigma(P_2) = P_1 + P_2. $$
\end{lemma}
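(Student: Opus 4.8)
The plan is to make the $G_L$-module $E[l](\Lbar)$ completely explicit via the Tate parametrization and then exhibit $\sigma$ by a short ramification computation. First I would invoke the Tate uniformization (recalled above, cf.\ \cite[Ch.~V]{Silverman2013}): there is a $G_L$-equivariant isomorphism $E(\Lbar)\cong\Lbar^{\times}/q^{\Z}$, where $\ord_L(q)=-\ord_L(j_E)>0$, so the hypothesis becomes $l\nmid\ord_L(q)$. Fixing a primitive $l$-th root of unity $\zeta_l$ and an $l$-th root $q^{1/l}$ of $q$ in $\Lbar$, the $l$-torsion is $E[l](\Lbar)=\{\,[\zeta_l^{a}q^{b/l}]:0\le a,b<l\,\}$, and the classes $P_1:=[\zeta_l]$ and $P_2:=[q^{1/l}]$ form a basis of $E[l](\Lbar)\cong(\Z/l)^2$.

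Next I would record the Galois action in this basis. For $\tau\in G_L$ one has $\tau(\zeta_l)=\zeta_l^{\chi(\tau)}$ and $\tau(q^{1/l})=\zeta_l^{a(\tau)}q^{1/l}$, where $\chi$ is the mod-$l$ cyclotomic character and $a\colon G_L\to\F_l$ is the associated cocycle; hence $\tau P_1=\chi(\tau)P_1$ and $\tau P_2=a(\tau)P_1+P_2$. So it suffices to produce $\sigma\in I_{\Lbar/L}$ with $\chi(\sigma)\equiv 1$ and $a(\sigma)\equiv 1\pmod l$.

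To do this I would pass to $L':=L(\mu_l)$. Since $\Gal(L'/L)$ embeds into $(\Z/l)^{\times}$, the degree $[L':L]$, and in particular the ramification index $e(L'/L)$, is prime to $l$; together with $l\nmid\ord_L(q)$ this gives $l\nmid\ord_{L'}(q)$. Let $I'\subset I_{\Lbar/L}$ be the inertia subgroup of $L'$. On $I'$ the character $\chi$ is trivial, so $a|_{I'}\colon I'\to\F_l$ is a homomorphism, and its kernel is the inertia subgroup of $L'':=L'(q^{1/l})$. Because $q\notin(L'^{\times})^{l}$, the Kummer extension $L''/L'$ is Galois of degree $l$; writing $\ord_{L''}$ for the normalized valuation of $L''$, the identity $l\cdot\ord_{L''}(q^{1/l})=\ord_{L''}(q)=e(L''/L')\cdot\ord_{L'}(q)$ together with $l\nmid\ord_{L'}(q)$ forces $l\mid e(L''/L')$, so $L''/L'$ is totally ramified. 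Hence $\im(a|_{I'})$ has order $e(L''/L')=l$, i.e.\ $a|_{I'}$ is onto $\F_l$. Choosing $\sigma\in I'$ with $a(\sigma)=1$ then gives $\sigma\in I_{\Lbar/L}$ with $\sigma(P_1)=\chi(\sigma)P_1=P_1$ and $\sigma(P_2)=a(\sigma)P_1+P_2=P_1+P_2$, as required.

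The only genuine obstacle is the bookkeeping with normalized valuations and ramification indices along the tower $L\subset L'\subset L''$ — in particular checking that the coprimality $l\nmid\ord_L(j_E)$ propagates at each step — together with the minor point that the argument should be uniform in whether or not $l=p$; I avoid any case split by never separating $\mu_l$ from the ground field once we are over $L'$, so that $L''/L'$ is a genuine degree-$l$ Kummer extension irrespective of the residue characteristic.
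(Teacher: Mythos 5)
Your route is essentially the paper's route (Tate parametrization, then a Kummer/ramification computation producing an inertia element acting as a transvection), but there is one genuine gap at the very first step: the hypothesis $\ord_L(j_E)<0$ does \emph{not} give you a $G_L$-equivariant isomorphism $E(\Lbar)\cong \Lbar^{\times}/q^{\Z}$. That equivariance is Tate's theorem for curves with \emph{split} multiplicative reduction over $L$; under the stated hypothesis $E$ may have non-split multiplicative or even additive (potentially multiplicative) reduction, and then the Tate parametrization is Galois-equivariant only up to the quadratic character $\chi_{\gamma}$ cut out by $L(\sqrt{\gamma})/L$ with $\gamma=-c_4/c_6$, i.e.\ $\tau$ acts on $\Lbar^{\times}/q^{\Z}$ by $u\mapsto \tau(u)^{\chi_{\gamma}(\tau)}$. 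Consequently your formulas $\tau P_1=\chi(\tau)P_1$, $\tau P_2=a(\tau)P_1+P_2$ acquire an extra sign $\chi_{\gamma}(\tau)$, and your chosen $\sigma$, which lies in the inertia of $L'=L(\mu_l)$ but need not fix $\sqrt{\gamma}$ (that quadratic extension can be ramified and disjoint from $L(\mu_l)$), may satisfy $\chi_{\gamma}(\sigma)=-1$; then $\sigma(P_1)=-P_1\neq P_1$ since $l\geq 3$, and the conclusion fails for that $\sigma$.

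The repair is cheap and is exactly the paper's ``reduction step'': enlarge the base to $L'=L(\sqrt{\gamma},\mu_l)$ before running your Kummer argument. Then $[L':L]$ divides $2(l-1)$, so it is prime to $l$ and $l\nmid\ord_{L'}(q)$ still holds; over $L'$ the reduction is split and $\zeta_l$ is rational, so the untwisted equivariance you used is valid, your totally-ramified degree-$l$ Kummer extension $L''=L'(q^{1/l})$ produces $\sigma\in I_{\Lbar/L'}\subset I_{\Lbar/L}$ with $a(\sigma)=1$, and the computation closes as you wrote it. (Alternatively one can patch without enlarging $L'$ by replacing $\sigma$ with $\sigma^2$ and rescaling $P_1$ to $2P_1$, but the quadratic-twist issue must be addressed one way or another; as written, the claim of $G_L$-equivariance is the missing ingredient.) Aside from this point, your ramification bookkeeping along $L\subset L'\subset L''$ is correct and matches the paper's argument.
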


\begin{proof}
The basic strategy is to 
\begin{enumerate} \item show we can reduce to the case where we have split multiplicative reduction and $\zeta_l \in L$, and 
\item prove the split case. 
\end{enumerate}
The proof of the split case goes as follows: Let $q$ be the Tate parameter of $E/L$ and let $Q = q^{1/l} \in \Lbar$ be an $l$th root. 
Then $L(Q,\zeta_l)/L(\zeta_l)$ is a Kummer extension and $G(L(Q,\zeta_l)/L(\zeta_l)) = \langle \sigma \rangle$ with 
 $$ \sigma(Q) = \zeta_l Q.$$
We will make use of the fact that the Tate uniformization $\phi:\Lbar^{\times}/q^{\Z} \to E(\Lbar)$ is an isomorphism of $G_L$-modules; using this isomorphism we have explicit description of torsion via $E(\Lbar)[l] \xleftarrow{\phi} (\zeta_l^{\Z}Q^{\Z})/q^{\Z}$.
We set 
$$P_1 = \phi(\zeta_l), P_2 = \phi(Q),$$
and compute:
 $$\sigma(P_1) = \sigma(\phi(\zeta_l)) = \phi(\sigma(\zeta_l)) = \sigma(\zeta_l) = P_1,$$
 $$\sigma(P_2) = \sigma(\phi(Q)) = \phi(\sigma(Q)) = \phi(\zeta_lQ) = \phi(\zeta) + \phi(Q) = P_1 + P_2. $$
This completes the split multiplicative reduction case.

The reduction step says the following: If ${L'}/L$ is a finite extension with $l \nmid [{L'}:L]$
and there exists some $\sigma \in I_{\Lbar/{L'}}$ and $P_1,P_2\in E[l](\Lbar)$ such that $\sigma(P_1) = P_1$ and $\sigma(P_2) = P_1 + P_2$ and then $l\nmid \ord_{L'}(j_E)$  the same holds for $L$. 

We give a proof of the reduction step. 
If $l\nmid \ord_{L'}(q)$ then there exists some $\sigma\in I_{\Lbar/L'}$ with $P_1$ and $P_2$ giving a transvection.
Suppose now $l\nmid \ord_L(q)$. 
Since $\ord_{L'}(q) = e({L'}/L) \ord_L(q)$ we get that $l\nmid \ord_{L'}(q)$ unless $l\vert e(L'/L)$.
By hypothesis, there exists some $\sigma\in I_{\Lbar/{L'}}$ and $P_1,P_2 \in E(\Lbar)$ giving a transvection. 
Since $I_{\Lbar/{L'}} \subset I_{\Lbar/L}$ this gives the result. 

General case: suppose $l\nmid \ord_L(q)$. 
Let $L' = L(\sqrt{\gamma},\zeta_l)$ so that $E_{L'}$ has split multiplicative reduction and a root of unity (as in case (1)).\footnote{$\gamma = -c_4/c_6$}
We have that $[L':L] \vert 2(l-1)$ and hence $l\nmid [L':L]$.  
The general result now follows from the reduction step. 
\end{proof}

This now brings us to the main point of this subsection, the ``local canonical splitting'' (which Mochizuki attempts to globalize): If $L/\Q_p$ is a finite extension, and $E/L$ is an elliptic curve with multiplicative reduction then there exists a canonical inclusion $\mu_l(\Lbar) \xrightarrow{\can} E(\Lbar)[l]$ of $G_L$-modules given by the following diagram.
\begin{equation}\label{E:local-multiplicative-subspace}
\xymatrix{
	1 \ar[r] & \mu_l(\Lbar) \ar[d]_{\sim} \ar[r] & E(\Lbar)[l] \ar[r] \ar[d]_{\phi}^{\sim} & \Z/l \ar[r] \ar[d]_{\sim} & 1 \\
	1 \ar[r] & \zeta_l^{\Z} \ar[r] & \zeta^{\Z}_lQ^{\Z}/q^{\Z} \ar[r] & \zeta_l^{\Z} Q^{\Z}/\zeta_l^{\Z} q^{\Z} \ar[r] & 1 		
}.
\end{equation}
We make two important remarks. First, the sequence \eqref{E:local-multiplicative-subspace} does not depend on the choices of $Q$ and $\zeta_l$. 
Second, the generator of the quotient is 
\begin{equation}\label{E:canonical-generator}
	Q \equiv q^{1/l} \ \ \mod \ \langle \zeta_l, q \rangle
\end{equation}
	and this class is uniquely determined.
We record this situation using the following definition.

\begin{definition}
	Let $L$ be a finite extension of $\Q_p$ and $E$ an elliptic curve over $L$ with multiplicative reduction.
	\begin{enumerate}
		\item We call the $G_L$-submodule isomorphic to $\F_l(1):=\mu_l(\Lbar)$ in \eqref{E:local-multiplicative-subspace} the \emph{ (local) multiplicative subspace}.
		\item We call the generator of the quotient of the $G_L$-module $E[l]$ isomorphic to $\Z/l$ (as a $G_L$-module) given in \eqref{E:canonical-generator} the \emph{(local) canonical generator}.
		(In terms of the Tate uniformization, this is just the image of an $l$th root of the Tate parameter.) 
	\end{enumerate}
\end{definition}

Mochizuki wishes to globalize this situation. Consider the situation now an elliptic curve $E$ over a number field $F$ and let $K = F(E[l])$ be the $l$-division field of $F$ associated to $E$.
Fix $M \subset E[l](K)$ isomorphic to $\ZZ/l$ as a subgroup, which we may or may not view it as a $G(K/F)$-module (most of the time it will be a subgroup which is not preserved by $G(K/F)$). 

Let $w \in V(K)_0$ and let $v$ denote the place in $V(F)$ below $w$. 
Completing $K$ at the place $w$ gives the field $K_w$ determines which naturally contains the completion of $F$ at $v$; this gives a diagram of inclusions
 $$ \xymatrix{
 	F \ar[r] \ar[d] & K\ar[d] \\
 	F_v \ar[r] & K_w 
 }.$$  
We will let $G(w/v)= G(K_w/F_v)$ and also recognize it as the stabilizer of $w \in V(K)_v$ under the natural action of $G(K/F)$. 
In this way $G(w/v) \subset G(K/F)$ naturally.
All of this gives $E(K) \subset E(K_w) = E_v(K_w)$ (as groups) which allows us to view $M$ as a subgroup of $E_v(K_w)$.
We will let $M_w$ denote this same group now viewed with a $G(w/v)$-action coming from the action of $G(w/v)$ on $E_v(K_w)$. 
Note that most of the time this is not a $G(w/v)$-submodule --- if $M$ started as a $G(K/F)$-submodule then the restriction of the $G(K/F)$-action to a $G(w/v)$-action would give $M_w$ the structure of a $G(w/v)$-submodule of $E_v[l]$ but these the $G(K/F)$-submodules never exist. 
For future reference, for $P \in E(K)$ we will let $P_w$ denote the same element now viewed as an element in $E_v(K_w)$ (viewed as a $G(w/v)$-module). 
A final piece of notation: for each $w \in V(K)$ we let 
 $$\mu_l(w) \subset E_v(K_w)[l]$$ 
be $G(w/v)$-sub module isomorphic to $\ZZ/l(1)$.
We call this \emph{the local multiplicative subspace of $E$ at $w$}. 

\begin{remark}[WARNING!]
	Note that $M_w$ is not a base change of a group scheme to $K_w$ or an object where we consider automorphisms on it which fix $K_w$. If one thinks in these terms one might confuse $M_w$ with a module that has a trivial Galois action which is not what we want.
\end{remark}

We can now state the globalizability question: is it possible to find some $M \subset E(K)$ such that $M_w = \mu_l(w)$ for each $w \in V(K)$ where $E$ has multiplicative reduction?
The answer is no, and this is stated in Mochizuki's first paper on Hodge-Arakelov theory \cite[Chapter III]{Mochizuki1999}.
Heuristically, there are $(l+1)$-subgroups of $E[l](K)$ isomorphic to $\Z/l$ as groups and if we think of $M \subset E_F[l]$ as random variable selecting some ``random'' $M_w$ uniformly, then the canonical multiplicative subspace should be selected with probability $1/(l+1)$.

Although such a choice of $M$ can't be achieved at all places, the purpose of $(\Vu,\Mu,\epsu)$ is to simulate a ``global multiplicative subspace'' and ``global canonical generator'' by ``rigging'' $\Vu$ so that for each $\vu \in \Vu$, we have $M_{\vu} = \mu_l(\vu)$.
This is achieved in \S\ref{S:existence}. 
We now give some definitions.
\begin{definition}
	Let $w \in V(K)$ be a place of multiplicative reduction for $E_K$ sitting over $v \in V(F)$.
	\begin{enumerate}
		\item Let $M\subset E[l](K)$ be a sub $G(K/F)$-module isomorphic to $\ZZ/l$ as an abelian group.
		We say that $M$ is \emph{a global multiplicative subspace} for $w$ if 
		$$M_w = \mu_l(w)$$ 
		as a $G(w/v)$-module.
		\item Let $\epsu \in \Mu$.
		We say that $\epsu$ is a \emph{(spin) global canonical generator} for $w$ if the image of $\epsu_w$ in the $\Mu_w$ ($\cong \ZZ/l$ by the first part) is $\pm$ the local canonical canonical generator at $w$.
	\end{enumerate}
	If $S \subset V(K)$ we will say that say that $M$ is \emph{a global multiplicative subspace for $S$} if it is for each $w \in S$.
	Similarly, we will say $\epsu$ is a \emph{global canonical generator for $S$} if it is for each $w \in S$. 
\end{definition}

\subsection{Initial Theta Data: ``Fake'' Global Multiplicative Subspaces and Canonical Generators}\label{S:existence}
Let $E$ be an elliptic curve over a number field $F$.
Fix $l$ a prime.  
Let $K=F(E[l])= \Fbar^{\ker(\rho_l)}$ be the field obtained by adjoining the $l$-torsion. 
Consider now the tuple 
 $$(\Vu,\Mu,\epsu).$$
where 
\begin{itemize}
	\item $\Vu \subset V(K)$ is a lift of $V(\Q(j_E))$.
	\item $\Mu \subset \Eu[l](\Kbar)$ is a subgroup. ($\leftrightarrow M \subset E[l](K)$ by Fricke)
	\item $\epsu \in \Mu$ (which we only care about up to $\pm 1$)
\end{itemize}

We can now state the definition of initial theta data. 
\begin{definition}\label{D:initial-theta-data}
	Fix $(F,l,E,S)$ is a tuple of pre theta data.
	Let $K = F(E[l])$. 
	\emph{Initial theta data} is a tuple  
		$$ (F, l, E, S, \Vu, \Mu, \epsu)$$
		where $\Mu$ is the Fricke involute of $M$ a global multiplicative subspace (with respect to $\Vu \cap V(K)_S$) and $\epsu \in \Mu(\Fbar)$ is a global canonical generator (with respect to $\Vu\cap V(K)_S$).
\end{definition}

If $E$ is a member of a tuple of initial theta data we say \emph{$E$ sits in initial theta data}.

We remark that Mochzuki definition environment for initial theta data is \cite[Definition 3.1]{IUT1}. 
It includes a lot of material which is nonstandard for definition environments. 
For example, it contains auxillary instantiation of notation for Galois groups, fields, elliptic curves, and various sets of places. It also includes notation for auxillary covers, fibers of morphism and remarks about equivalences of various auxillary constructions as well as several statements about what can be inferred from the various subdefinitions provided.

\subsection{Existence of Initial Theta Data}
We now prove existence of initial theta data.
We begin by giving initial theta data over a single place of bad reduction.
\begin{lemma}[Simple Initial Theta Data]\label{L:simple-initial-theta-data}
	Let $F$ be a number field. 
	Let $E/F$ be an elliptic curve. 
	Suppose that $j_E \notin \OO_F$.
	Let $K = F(E[l])$ for some rational prime $l$. 
	For all but finitely many choices of $l$, there exists some $P_1, P_2 \in E[l](K)$ and some $\sigma\in G_F$ with such that 
	 $$ \rho_l(\sigma) = \left[ \begin{matrix} 1 & 1 \\ 0 & 1 \end{matrix} \right ]$$
	with respect to this basis. 
\end{lemma}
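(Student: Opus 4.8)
The plan is to reduce to the local situation and apply the canonical-splitting lemma proved above. Since $j_E \notin \OO_F$, there is a finite place $v \in V(F)_0$ with $\ord_v(j_E) < 0$; set $L = F_v$, so that $\ord_L(j_E) = \ord_v(j_E)$ is a fixed nonzero integer and $E_L$ does not have potentially good reduction. Hence the hypotheses of the local transvection lemma above apply to $E_L$ as soon as $l \geq 3$ and $l \nmid \ord_L(j_E)$. Excluding the finitely many primes that fail one of these two conditions, for every remaining $l$ that lemma produces some $\sigma_0 \in I_{\overline{L}/L}$ and generators $P_1, P_2 \in E[l](\overline{L})$ with $\sigma_0(P_1) = P_1$ and $\sigma_0(P_2) = P_1 + P_2$, i.e. $\rho_{l,v}(\sigma_0) = \left[ \begin{matrix} 1 & 1 \\ 0 & 1 \end{matrix}\right]$ with respect to the basis $(P_1,P_2)$, where $\rho_{l,v}\colon G_L \to \Aut(E[l])$ is the local mod-$l$ representation.

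The second step is to transport this datum back to $G_F$ and to $K$. Fix an embedding $\overline{F} \hookrightarrow \overline{L}$ extending $F \hookrightarrow L$ (as in the conventions on local and global fields above); this yields an inclusion $G_L \hookrightarrow G_F$ under which $\rho_l|_{G_L} = \rho_{l,v}$, as well as an identification $E[l](\overline{F}) = E[l](\overline{L})$, since the $l$-torsion is the vanishing locus of the $l$-division polynomial, which has coefficients in $F$. Under these identifications $P_1, P_2 \in E[l](\overline{F})$, and because $K = F(E[l]) = \overline{F}^{\ker(\rho_l)}$ we have $E[l](\overline{F}) = E[l](K)$, so $P_1, P_2 \in E[l](K)$ as the statement demands. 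Letting $\sigma \in G_F$ be the image of $\sigma_0$, we obtain $\rho_l(\sigma) = \rho_{l,v}(\sigma_0) = \left[ \begin{matrix} 1 & 1 \\ 0 & 1 \end{matrix}\right]$ in the basis $(P_1,P_2)$, which is the desired conclusion; the exceptional set of primes is contained in $\{2\}$ together with the prime divisors of $\ord_v(j_E)$, hence is finite.

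I do not expect a genuine obstacle here: the mathematical content is entirely contained in the local lemma already established, and what remains is bookkeeping. The one place that deserves a little care is the compatibility $\rho_l|_{G_L} = \rho_{l,v}$ coming from a fixed compatible choice of embeddings $\overline{F} \hookrightarrow \overline{L}$ — this is precisely what guarantees that the matrix identity witnessed over the completion $L$ survives when $\sigma_0$ is regarded as an element of $G_F$ — together with the (automatic) observation that $K = F(E[l])$ contains all of $E[l](\overline{F})$, so that the transvection basis may be taken to consist of honest $K$-points.
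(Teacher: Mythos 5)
Your proposal is correct and follows essentially the same route as the paper: pick a finite place $w$ with $\vert j_E\vert_w>1$, invoke the local transvection lemma there (which forces the finitely many exclusions $l=2$ and $l\mid \ord_w(j_E)$), and transport the basis and the inertia element back via a fixed embedding $\Fbar\subset\overline{F_w}$ giving $G_w\subset G_F$ and $E[l](\Fbar)=E[l](K)$. Your write-up is in fact more explicit than the paper's about the restriction compatibility $\rho_l|_{G_w}=\rho_{l,w}$ and about which primes are excluded, but the underlying argument is the same.
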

\begin{proof}
	Compare what follows to \cite[V.6.2]{Silverman2013}.
	Let $w \in V(F)_0$ with $\vert j_E \vert_w >1$. 
	Fix an inclusions $\Fbar \subset \overline{F_w}$ so that we have $G_w \subset G_F$. 
	By the existence of local transvections there exists some $\sigma \in G_w$ and $P_{w,1},P_{w,2} \in E[l](K_w)$ so that $\rho_l(\sigma)$ is unipotent. 
	We need now some $\Mu \subset E[l](K)$ and some $\epsu \in \Mu(K)$ (up to sign) such that after base change they identify with the  $q^{1/l}$.
	The identifications of $G_w\subset G_K$ and $E[l](\Fbar) \subset E(\overline{F_w})$ give the result letting $S = \lbrace w \rbrace$.

\end{proof}

One claim of \cite[Proposition 2.2]{IUT4} is that there exists a global multiplicative subspace $M \subset E[l]$ relative to some $\Vu \subset V(K)$ a section of $V(F_0)$ (in the case when we have more than one bad prime). 
The lemma below verifies this assertion.

\begin{lemma}[Construction of Global Multiplicative Subspaces] \label{L:global-multiplicative}
	For all $M \subset E(\Fbar)[l]$ and all $v \in V(F_0)$ there exists some $\vu \in V(K)_v$ such that $M_{\vu} \cong \mu_l(\vu)$. 
	Performing this operations allows us to construct a collection $\Vu$ for a given subspace $M$ so that $M$ is a global multiplicative subspace with respect to $\Vu$. 
\end{lemma}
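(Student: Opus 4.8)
The plan is to recast the assertion as a local statement about decomposition groups, combined with the transitivity of $\SL_2(\F_l)$ on lines. Throughout write $F_0=\Q(j_E)$, so that $F_0\subset F\subset K$ and $G(K/F)\cong\rho_l(G_F)$ acts faithfully on $E[l](\Fbar)\cong(\Z/l)^2$. We only need to treat places $v\in V(F_0)$ lying under bad (potentially multiplicative) reduction --- the only $v$ for which $\mu_l(\vu)$ is even defined --- and at the end we lift the remaining places of $F_0$ to $V(K)$ arbitrarily.

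First I would fix such a $v$, choose any $w\in V(F)$ above it (so $\ord_w(j_E)<0$; moreover the congruence conditions in the pre-theta-data hypotheses, namely $l\nmid\ord_v(q_v)$ and $l\nmid[F:\Q(j_E)]$, give $l\nmid\ord_w(j_E)$), and then choose any $\vu_0\in V(K)_w$ with decomposition group $D:=G(K_{\vu_0}/F_w)\subset G(K/F)$. The key observation is that $\mu_l(\vu_0)$ is the \emph{unique} $D$-stable line in $E[l](\Fbar)$: it is $D$-stable by construction (it is the reduction kernel, i.e.\ the image of $\zeta_l^{\Z}$ under the Tate uniformization, the non-split case being harmless since $l$ is odd), while conversely the local transvection Lemma of \S\ref{S:transvections} applied to $E/F_w$ provides an element of $D$ whose matrix in the basis $P_1,P_2$ of that Lemma is $\left[\begin{matrix}1&1\\0&1\end{matrix}\right]$, and a one-line check shows $\langle P_1\rangle=\mu_l(\vu_0)$ is the only line it fixes. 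In particular $M_{\vu_0}=\mu_l(\vu_0)$ if and only if $M=\mu_l(\vu_0)$ as subgroups of $E[l](\Fbar)$.

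Next I would invoke hypothesis \ref{I:galois}: $\rho_l(G_F)\supseteq\SL_2(\F_l)$, which acts transitively on the $l+1$ lines of $E[l](\Fbar)$. Pick $\sigma\in G(K/F)$ with $\sigma\cdot\mu_l(\vu_0)=M$ and set $\vu:=\sigma\vu_0\in V(K)_w\subset V(K)_v$. Since the decomposition group at $\vu$ is $\sigma D\sigma^{-1}$, transporting the previous paragraph by $\sigma$ shows that the unique line stable under it is $\sigma\cdot\mu_l(\vu_0)=M$; as the multiplicative subspace is always stable under the decomposition group, this forces $\mu_l(\vu)=M$, hence $M_\vu=\mu_l(\vu)$. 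Running this construction over all $v\in S$, and lifting the other places of $F_0$ arbitrarily, produces a section $\Vu$ of $V(K)\to V(F_0)$ for which $M$ is a global multiplicative subspace with respect to $\Vu\cap V(K)_S$ (this refines Lemma~\ref{L:simple-initial-theta-data}, which is the case $|S|=1$).

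The only substantive input is the transitivity of $\SL_2(\F_l)$ on $\PP^1(\F_l)$; the rest is bookkeeping. The point requiring care is the three-layer tower: $G(K/F)$ acts transitively only inside a single fiber $V(K)_w$ over a place $w$ of $F$, not on all of $V(K)_v$, so one must fix $w$ first, and one must check that conjugating $\vu_0$ by $\sigma\in G(K/F)$ transports both the decomposition group and the local multiplicative subspace compatibly --- which works precisely because $\mu_l(\vu_0)$ admits the group-theoretic characterization ``the unique stable line''. One should also verify that the hypothesis of the local transvection Lemma genuinely holds at $w$ (this is exactly what the congruence conditions on $l$ buy) and that the distinction between $\Z/l$ and $\mu_l=\Z/l(1)$ at non-split places causes no trouble, which it does not since $l$ is odd.
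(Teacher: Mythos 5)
Your proof is correct and follows the same overall architecture as the paper's: fix a set-theoretic lift fiberwise, use the hypothesis that $\rho_l(G_F)$ contains (a copy of) $\SL_2(\F_l)$, hence acts transitively on the $l+1$ lines of $E[l]$, to choose $\sigma$ carrying the local multiplicative subspace to $M$, replace the chosen place by its $\sigma$-translate, and only at the end compose with an arbitrary section $V(F_0)\to V(F)$ (resp.\ lift the remaining places arbitrarily). The one genuine difference is how the transport step is justified. The paper proves directly that ``goodness'' moves along the Galois action: if $M=\mu_l(w)$ then $\sigma(M)=\mu_l(\sigma(w))$, by conjugating the decomposition group and computing $\tau'\cdot\sigma(m)=\sigma\tau\sigma^{-1}\sigma(m)=\sigma(\chi_l(\tau)m)$, i.e.\ an explicit cyclotomic-character calculation. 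You instead characterize $\mu_l(\vu_0)$ group-theoretically as the \emph{unique} line stable under the decomposition group, with uniqueness supplied by the local transvection lemma (the unipotent matrix fixes only $\langle P_1\rangle$), and then deduce $\mu_l(\sigma\vu_0)=\sigma\mu_l(\vu_0)$ from uniqueness plus stability. Your route costs the extra hypotheses needed to run the transvection lemma at the chosen place ($l\nmid \ord_w(j_E)$, which you correctly extract from the congruence conditions $l\nmid\ord_v(q_v)$ and $l\nmid[F:\Q(j_E)]$), whereas the paper's equivariance computation needs no such input; in exchange, your uniqueness characterization makes the transport essentially formal and also pins down $\mu_l(\vu)$ intrinsically, which is a useful observation in its own right. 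You are also more careful than the paper on two small points: restricting attention to places of multiplicative reduction (the only ones where $\mu_l(\vu)$ is defined) and noting that $G(K/F)$ acts transitively only within a fiber $V(K)_w$ over $V(F)$, not over $V(F_0)$ --- both of which the paper handles implicitly by working with a section $V(F)\to V(K)$ first and composing with $V(F_0)\to V(F)$ at the end, exactly as you do.
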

\begin{proof}
	Call a pair $(w,M)$ consisting of a place $w \in V(K)_v$ and a subspace $M\subset E[l]$ \emph{good} if $M = \mu(w)$. 
	Note that if a pair $(w,M)$ is good and $\sigma \in G_F$ then $(\sigma(w),\sigma(M))$ is good. 
	This is because if $\sigma(w) = w'$ then $\sigma G(w/v) \sigma^{-1}= G(w'/v)$ and we can write $\tau' = \sigma \tau \sigma^{-1}$ for each $\tau' \in G(w'/v)$. 
	This then shows that the action of $\tau'$ on $\sigma(M)$ performs as advertized since for each $m \in M$
	 $$ \tau' \cdot \sigma(m) = \sigma\tau\sigma^{-1} \sigma m = \sigma \tau(m) = \sigma (\chi_l(\tau) m) = \chi_l(\tau) \sigma(m). $$
	This proves $\sigma(\mu(w)) \cong \mu(\sigma(w))$.
	
	We will now apply the above to construct a good collection of $\Vu$. 
	First let $v \mapsto \widetilde{v}$ be any set theoretic section $V(F) \to V(K)$ of the natural projection $V(K) \to V(F)$.  
	Consider good pairs $(\widetilde{v},\mu(\widetilde{v}))$ where $\mu(\widetilde{v}) \subset E[l](K)$ is the unique subspace which is cyclotomic for $G(\widetilde{v}/v) \subset G(K/F)$. 
	Note that $G_F$ acts transitively on both $V(K)_v$ and the subspaces of $E[l](K) \cong \FF_l^2$ by the hypothesis that $\rho_l(G_F) \supset \SL_2(\FF_l)$. 
	For each $\widetilde{v}$ select some $\sigma_v \in G(K/F)$ such that $\sigma_v(\mu(v))=M$. 
	Define a section $\Vu \subset V(K)$ by 
	 $$ \vu:=\sigma_v(\widetilde{v}). $$
	To define a section $V(F_0)\to V(K)$ take any section $V(F_0) \to V(F)$ and compose it with a section of the type described above.
\end{proof}

\begin{remark}
	\begin{enumerate}
		\item The reader may wish to consult Chapter 2, Lemma 1.3 of Mochizuki's Hodge Arakelov Theory paper \cite{Mochizuki1999} for a criteria for determining when a subspace is multiplicative.
		\item  In \cite[Definition 3.1]{IUT1} Mochizuki uses $\Cu_K$ instead of $\Mu$ and defines $\epsu$ as a cusp of $\Cu_K$ rather that as an element of $\Mu(K)$.
		Here $\Cu_K$ is a hyperbolic orbicurve defined over $K$ (a stack), defined to be the quotient of $\Xu_K$ by the elliptic involution.
		We claim that these are equivalent.
		
		First suppose we are given the data of $\Cu_K$ together with a cusp $\epsu \in \Cu_K^c$. 
		Since we have the maps $\Cu_K \to C_K$ and $X_K \to C_K$ we can form the fiber product $\Xu_K := \Cu_K \times_{C_K} X_K$ which has the isogeny $\Xu_K \to X_K$. 
		The kernel of the compactification of this map is $\Mu$. 
		Conversely, suppose that we have $\Mu$ as (a quotient of $E[l]$) and $\epsu$. 
		This determines the cover $\Eu$ as in the second part of \S\ref{S:fricke-covers}, via the Galois correspondence.	
	\end{enumerate}
\end{remark}

\subsection{Example: Initial Theta Data for E11a1}\label{S:e11a1}
We now produce elements of initial theta data starting from the elliptic curve $E/\Q$ with Cremona label \textsf{11a1}.
This elliptic curve is given by $$E:y^2+y=x^3-x^2-10x-20,$$
and its basic invariants are
\begin{align*}
	c_4(E) &= 496 = 2^4 \cdot 31,\\
	c_6(E) &= 20008 = 2^3\cdot 41 \cdot 61,\\
	j_E &= -1 \cdot  2^{12} \cdot 11^{-5} \cdot 31^3.
\end{align*}
We now go through the ``Simple Initial Theta Data'' computation (Lemma~\ref{L:simple-initial-theta-data}) for this curve explicitly.

\subsubsection{Non-arithmeticity}
As explained in \ref{E:E11a1} this curve is not arithmetic.
In general to obtain arithmeticity one just needs to exclude CM curves and the Sijsling $j$-invariants. 

\subsubsection{Non-empty Collection of Places of Bad Reduction $S=V^{\bad}_{mod}$}
There is only one place of bad reduction. 
That is at $p=11$ and $E$ has split multiplicative reduction at this place.
This means $S = \lbrace 11 \rbrace \subset V(\Q)$.

\subsubsection{Minimal Weierstrass Model at $p=11$}
The local Weierstrass minimal model at $p=11$ of $E$ is 
$$ E^{\min}_{11}:y^2 + y = x^3 - x^2 - 10x - 20.$$
As $E$ is defined over $\Q$ which has class number $1$ we have that this is a global minimal model. 
We record its invariants:
\begin{align*}
	&\mbox{Minimal discriminant valuation}: 5\\
	&\mbox{Conductor exponent}: 1\\
	&\mbox{Kodaira Symbol:} \operatorname{I5}\\
	&\mbox{Tamagawa Number:}  \ \vert \mathcal{E}/\mathcal{E}^0\vert = \vert \ZZ/q \vert =\ord(q) = 5
\end{align*}

\subsubsection{The Tate Uniformization at $p=11$}
The Tate model of this curve is 
$$E_{q_{11}}:y^2 + xy=x^3 +s_4(q_{11})x + s_6(q_{11}),$$
where $q_{11}$ is the Tate parameter. 
We have computed these out to $O(11^{25})$ using \textsf{Sage}\footnote{See \url{http://doc.sagemath.org/html/en/reference/padics/sage/rings/padics/padic_extension_generic.html}
	and \url{http://sporadic.stanford.edu/reference/curves/sage/schemes/elliptic_curves/ell_tate_curve.html}.	
}:
\begin{align*}
	q=q_{11} &= 0. 0, 0, 0, 0, 10, 2, 6, 6, 5, 4, 4, 1, 4, 1, 0, 5, 9, 9, 3, 3, 1, 3, 4,\ldots\\
	s_4 &= 0.0,0,0, 0, 5, 7, 1, 0, 5, 9, 1, 9, 2, 10, 2, 0, 1, 6, 2, 6, 4, 10, 10,\ldots \\
	s_6 &=0. 0, 0, 0, 0, 1, 8, 4, 4, 5, 5, 10, 2, 1, 8, 2, 7, 10, 9, 6, 3, 3, 8, 5,\ldots
\end{align*}
Here the decimal is the beginning of the integral digits and commas separate $11$-adic digits.
We make use of the the uniformization map 
$$ \Qbar_{11}^{\times}/q_{11}^{\ZZ} \xrightarrow{\varphi} E(\Qbar_{11}).$$	
In particular for each $n$ if we let $\zeta_n$ be a primitive $n$th root of unity and $Q_n= q^{1/n}$ be a choice of $n$th root we have
$$ P_{1,n} = \varphi(\zeta_n), P_{2,n} = \varphi(Q_n) \in E[n](\Qbar).$$

\subsubsection{Field $F$} We will let $F = \Q(\sqrt{-1}, E(\Qbar)[30])$. 
To obtain $E(\Qbar)[30]$ ``numerically'' (Turing computable, but not computable in practice) in $\Qbar$ we fix an embedding $\Qbar \subset \Qbar_{11}$.
We than take the coordinates of the Tate uniformization and map them to our model over $\QQ$. 
Observe that $\varphi = f\circ \phi$ where $f:E_{q} \to E$ is the morphism defined over $\Q_{11}$ (since $E$ has split multiplicative reduction over $\Q_{11}$ --- with a global minimal Weierstrass model defined over $\Q$ and $\phi$ the Tate uniformization of $E_q$).
This give $E_q(\Qbar_{11})[30] \cong \zeta^{\Z} Q^{\Z}/q_{11}^{\Z}$ where $Q = q_{11}^{1/30}$ is some choice of $30$th root of $q$ and $\zeta$ is a primitive $l$th root of unity and taking the image of these points under $f$ gives $\QQbar$-points.

\subsubsection{Condition on $l$}
The prime $l=13$ satisfies the divisibility hypotheses of initial theta data. 
We have seen that $\ord(q) = 5$, the only bad place is $p=11$, and that the order of $[F:\Q(j_E)]$ (here $\Q(j_E) = \Q$) can be computed by viewing $F$ as a succession of prime torsion extensions. 
The order of the $r$-torsion extension divides $\vert \GL_2(\F_r) \vert$ for a given prime $r$. 
The relevant orders for $30$-torsion are given below:
\begin{center}
	\begin{tabular}{|c|c|} 
		\hline $r$ & $\vert GL_2(\F_r) \vert = (r^2-1)(r^2-1 -(r-1))$ \\
		\hline 2 & $6 = 2\cdot 3$\\
		\hline 3 & $48 = 2^4 \cdot 3$\\
		\hline 5 & $480 = 2^5\cdot 3\cdot 5$\\ \hline
		
	\end{tabular}
\end{center}

\subsubsection{Surjectivity of $\rho_l$} We require that $\rho_l: G_{F} \to \Aut(E[l])$ contains a copy of  $\SL_2(\F_l)$ relative to some basis. It can be checked, for example, using \textsf{Magma} that the mod $l$ Galois representation for $l>5$ is surjective.
We did this for $l=13$. In fact, the image of $G_{\QQ} \to \GL_2(\ZZhat)$ is the inverse image of $\rho_{550}(G_{\QQ}) \subset \GL_2(\ZZ/550)$ under the natural map $\GL_2(\ZZhat)\to \GL_2(\ZZ/550)$ using the \textsf{OpenImage} algorithm \cite{Zywina2022b}.

\subsubsection{Computation of the Field $K$}
We have $K = F(E_K[l](\Fbar))$ for $l=13$;
There are two approaches to explicit computations. 
One may use the Weierstrass uniformization (at archimedean places) and the Tate uniformization (at both archimedean and non-archimedean places).

\subsubsection{The generator $\underline{\epsilon}$}
We use (the image of) $q^{1/l}$ where $q$ is the Tate parameter at $p=11$ and $l=13$.
These values can be gleaned from the Tate uniformization: $(\QQbar_{11}^{\times}/q^{\ZZ})[13] = (\zeta_{13})^{\ZZ} (q_{11}^{1/13})^{\ZZ}/ q_{11}^{\ZZ}.$
So the torsion field at a local place has the form $K_{\vu}=\QQ_{11}(\zeta_6,\zeta_{13}, q_{11}^{1/13}, q_{11}^{1/6})$.

\subsubsection{ A Remark}
\begin{remark}
	One could hope to generate simple example of initial theta data where $K$ is small by looking for some $E$ over $\Q$ with many $\Q$-rational torsion points. 
	This approach fails because of Mazur's theorem.
	Mazur's Theorem says that if $E$ is an elliptic curve defined over $\Q$ the possible torsion subgroups of $E(\Q)$ are $C_1,C_2,\ldots,C_{10},C_{12}, C_2 \oplus C_4,C_2\oplus C_4,C_2\oplus C_6,C_2\oplus C_8$.
	Here $C_n$ denotes the cyclic group of order $n$ \cite[Theorem 8]{Mazur1977}.
	
	We would also like to point out that Szpiro's inequality implies a uniform computable bound on the number of torsion points in a field $F$ \cite[Exercise 5.16]{Silverman2013}.
\end{remark}

\subsection{Algorithm for Initial Theta Data}\label{S:algorithm}
Algorithm~\ref{A:initial-theta-data} below gives a procedure for computing initial theta data given an elliptic curve $E_0$ defined over $\QQ$ in \textsf{Sage} pseudocode. 
This algorithm will either return \textsf{False} if $E_0$ is not admissible (for example it has a Sijsling $j$-invariant) or it will return $F_2,\ell, \Vu, \Mu, \epsu$ as in the initial theta data. 

Algorithm~\ref{A:initial-theta-data} depends on the following functions 
\begin{itemize}
	\item \textsf{get\_l()}, which is described in Algorithm~\ref{A:get-ell} and returns a small $\ell$ which is valid for initial theta data given the input is an admissible elliptic curve
	\item \textsf{get\_global\_multiplicative\_subspace()}, a procedure described in Lemma~\ref{L:global-multiplicative} for constructing global multiplicative subspaces.
	\item \textsf{is\_sisjling()}, which tests for ``bad'' $j$-invariants and is described in Theorem~\ref{T:sijsling}.
	\item Functions from the \textsf{Sage} library which are documented at \url{https://www.sagemath.org}.
\end{itemize}

While \textsf{get\_l()} and Algorithm~\ref{A:initial-theta-data} are Turing computable, they are not practical as they rely on computations of large splitting fields. 
Line 12 of Algorithm~\ref{A:initial-theta-data} asks us to compute the $30$-torsion field of an elliptic curve and similarly the function \textsf{get\_global\_multiplicative\_subspace}() relies on this splitting field. 

What about determining $\ell$? Checking everything except for surjectivity of the Galois representation over $\QQ(E[30],i)$ is practical and one might hope that one could test surjectivity of the the mod $\ell$ representation over $\QQ$ or $\QQ(i)$ and that it would imply surjectivity of the mod $\ell$ representation over $F=\QQ(i,E[30])$ (surjectivity is overkill but sufficient for initial theta data). Unfortunately, the interaction between $E[a]$ and $E[b]$ for $a,b\in \ZZ_{\geq 1}$ can be complicated due to ``entanglements''; the image of the mod $\ell$ representation over $\QQ(i,E[30])$ may be smaller than image of the the mod $\ell$ representation over $\QQ(i)$; in fact there is a large program to classify such entanglements \cite{Daniels2023b}.

Fortunately, there do exist modern computational developments surrounding the images of Galois representations of elliptic curves which allows us to resolve our issues with entanglements.
We first explain abstractly why such an $\ell$ exists and then give an algorithm suggested to us by David Zureick-Brown. 

Let $E$ be a semi-stable elliptic curve over $\QQ$.
For a field $L$ and a some $n \in \ZZ_{\geq 1}$ we will let $\rho_{n,L}$ denote the representation $G(\overline{Q}/L)\to \Aut(E[n]) \cong \GL_2(\ZZ/n)$. 
Let $F = Q(i,E[30])$.
We seek to find a small prime $\ell$ such that $\rho_{\ell,F}$ is surjective.

By Serre's Open Image Theorem \cite[pg 260, item (2)]{Serre1972} the map $\rho_{\QQ}: G(\overline{\QQ}/\QQ) \to \GL_2(\widehat{Z})$ has an open image and hence is finite index.
This implies that all but finitely many of its images $\rho_{\QQ,\ell^{\infty}}$ into $\GL_2(\ZZ_{\ell})$ must be all of $\GL_2(\ZZ_{\ell})$. Hence that for all but finitely many $\ell$, the mod $\ell$ image is all of $\GL_2(\ZZ/\ell)$.
\footnote{See also \cite[Corollary 2.4]{Zhou2025}}

\begin{theorem}[\cite{Rouse2022},\cite{Zywina2022}]
	There exists an algorithm \textsf{OpenImage} which can compute the image of $G_{\QQ}\to \GL_2(\ZZhat)$ explicitly for any semi-stable elliptic curve.
\end{theorem}
The algorithm accompanying \cite{Zywina2022} is implemented in \textsf{Magma} can be found on github \cite{Zywina2022b}.\footnote{There are several forks of this repository. See the David Lowry-Duda, Andrew Sutherland, or David Roe's github pages.}

Here is how it works: for each elliptic curve $E$, \textsf{OpenImage} describes the image of $G_{\QQ}$ in $\GL_2(\ZZhat)$  as the inverse image of some explicit finitely generated subgroup of $\GL_2(\ZZ/M\ZZ)$ (where $M\in \ZZ_{\geq 1}$ depends on $E$) under the surjection $\GL_2(\ZZhat)\to\GL_2(\ZZ/M)$. That is it.
Note that in particular given prime $\ell$ if $\ell \nmid M$ then the mod $\ell$ Galois representation is surjective. Also observe that this algorithm computes all the Galois representations simultaneously so there is no need to do more computations as you search for larger primes.

The following is a description of how to use \textsf{OpenImage} to find an admissible prime $\ell$ for initial theta data.

Let $E./\QQ$ be an elliptic curve and $L$ be a number field and $\ell$ a prime.
Suppose two conditions: 1)  $Q(E[\ell]) \cap L = \QQ$, 2) $\rho_{\ell,\QQ}$ surjective. Then $\rho_{\ell,L}$ is surjective. 
This is because the condition that an automorphism fixes $L$ has no effect on the torsion points due to linear disjointness; the image of $G_L$ is the same as the image of $G_{\QQ}$.
Second, two Galois extensions $L_1$ and $L_2$ of a base field $L_0$ are linearly disjoint if and only if $G(L_1L_2/L_0) = G(L_1/L_0)\times G(L_2/L_0)$. This is because $[L_1L_2:L_0] = [L_1:L_0][L_2:L_0]$.
This can be tested explicitly in the case that $L_1 = \QQ(E[a])$ and $L_2=\QQ(E[b])$ for $a$ and $b$ coprime positive integers using the \textsf{OpenImage} as follows: 
Let $G_n$ denote the image of $\rho_{n,\QQ} $ for an integer $n \in \ZZ_{\geq 2}$. 
Note that $G_n$ is the Galois group of $\QQ(E[n])$ over $\QQ$.
The linear disjointness criterion of $\QQ(E[a])$ and $\QQ(E[b])$ for $a,b\in \ZZ_{\geq 2}$coprime then reads $\vert G_{ab} \vert = \vert G_a \vert \cdot \vert G_b \vert$.

Finally, to find a prime $\ell$ such that $\rho_{\ell,F}$ is surjective for $F=\QQ(\sqrt{i},E[30])$ we note that $\QQ(i) \subset \QQ(E[4])$ so that $F \subset \QQ(E[60])$ and then proceed to find the smallest $\ell$ such that the following two conditions are satisfied. 1) $\rho_{\QQ,\ell}$ is surjective. 2) $\QQ(E[\ell])$ is linear disjoint from $\QQ(E[60])$ (which is read off from the Galois groups as $\vert G_{60 \ell}\vert = \vert G_{\ell}\vert \cdot \vert G_{60} \vert$).

\begin{example}
Starting with the Elliptic Curve defined by $y^2 + y = x^3 + x^2 - 12x - 21$ over $\QQ$ of conductor $N=67$ we first see that the conductor is not divisible by $\ell=7$.
One then checks the Tate parameter:
 $$q=52\cdot 67 + 3\cdot 67^2 + 15\cdot 67^3 + 3\cdot 67^4 + 3\cdot 67^5 + 17\cdot 67^6 + 38\cdot 67^7 + 18\cdot 67^8 + O(67^9) ,$$ 
and sees it does not have a valuation not divisible by $\ell=7$. One then finds that its  $7$-adic galois representation surjective so $\ell=7$ works for this elliptic curve. 
\end{example}

In the implementation, $5525$ elliptic curves in the Cremona tables of prime conductor there exists some $\ell \in \lbrace 7,11,13\rbrace$ such that $\rho_{\ell,F}$ is surjective where $F=\QQ(i,E[30])$.

\begin{remark}
	Subsequent to Mochizuki’s 2021 work, the paper of Mochizuki, Fesenko, Hoshi, Minamide, and Porowski \cite{Mochizuki2022} has modified versions of initial theta data and in particular can work with an arbitrary set of bad places.
\end{remark}

\begin{remark}
	Explicit initial theta data from \cite{Mochizuki2022} appears in the preprint \cite{Zhou2025}.
\end{remark}

\newpage

\begin{algorithm}[H]\label{A:get-ell}
	\DontPrintSemicolon
	\SetKwProg{Fn}{def}{\string:}{}
	\SetKwData{jzero}{j0}
	\SetKwData{Fone}{F1}
	\SetKwData{Ftwo}{F2}
	\SetKwData{Ezero}{E0}
	\SetKwData{Eq}{Eq}
	\SetKwData{Eone}{E1}
	\SetKwData{l}{l}
	\SetKwData{Nzero}{N0}
	\SetKwData{Primes}{primes}
	\SetKwData{FoundEll}{found\_ell}
	\SetKwData{Deltazero}{Delta0}
	\SetKwData{badPlaces}{bad\_places}
	\SetKwData{True}{True}
	\SetKwData{False}{False}
	\SetKwData{Break}{Break}
	\SetKwData{cc}{c}
	\SetKwFunction{J}{j\_invariant}
	\SetKwFunction{getThetaData}{get\_theta\_data}
	\SetKwFunction{BaseChange}{base\_change}
	\SetKwFunction{NumberField}{NumberField}
	\SetKwFunction{FixBreak}{FixBreak}
	\SetKwFunction{IsSijsling}{is\_sijsling}
	\SetKwFunction{IsSemistable}{is\_semistable}
	\SetKwFunction{NextPrime}{next\_prime}
	\SetKwFunction{Conductor}{conductor}
	\SetKwFunction{PrimeDivisors}{prime\_divisors}
	\SetKwFunction{MinimalDiscriminant}{minimal\_discriminant}
	\SetKwFunction{TateCurve}{tate\_curve}
	\SetKwFunction{Parameter}{parameter}
	\SetKwFunction{Valuation}{valuation}  
	\SetKwFunction{GetEll}{get\_l}  
	\SetKwFunction{GaloisRepresentation}{galois\_representation}  
	\SetKwFunction{IsSurjective}{is\_surjective}  
	\SetKwFunction{TorsionField}{torsion\_field}  
	\SetKwFunction{GetGlobalMultiplicativeSubspace}{get\_global\_multiplicative\_subspace}

	\Fn{\GetEll{$E_0$,$N_0$}}{
		
		\KwInput{$E_0$ an elliptic curve Sage object over $\QQ$, $N_0$ the conductor of an elliptic curve}
		\KwOutput{$\ell$ a prime that can appear in initial theta data} 
		
		\badPlaces \colonequals \PrimeDivisors{$N_0$} \\
		$\ell$ \colonequals 5\\
		\FoundEll \colonequals \False\\
		
		\While{\FoundEll = \False}
		{
			$\ell$ \colonequals \NextPrime{$\ell$}\\
			\FoundEll \colonequals \True
			
			
			
			\If{ $\ell \vert N_0$ }{
				\FoundEll \colonequals \False
			}
			
			
			\If{ \FoundEll }{
				\For{ $p$ in \badPlaces}{
					$E_q$=$E_0$.\TateCurve{$p$}\\
					\If{ $\ell \mid$ $E_q$.\Parameter{}.\Valuation{} }{
						\FoundEll \colonequals \False \\
						\Break
					}
				}	
				
			}
			
			\If{\FoundEll}{
				$F_1$.$\langle b\rangle$ \colonequals \NumberField{$x^2+1$}\\
				$E_1$ \colonequals $E_0$.base\_change($F_1$)\\
				$F_2$.$\langle c \rangle$ \colonequals $E_1$.\TorsionField{$30$}\\
				$E_2$.base\_change($F_2$)\\
				$\rho$ \colonequals $E_2$.\GaloisRepresentation{}\\
				\If{ not $\rho$.\IsSurjective{$\ell$} }{
					\FoundEll \colonequals \False
				}
				
			}
			
		}
		\Return{ $\ell$}
		
	}
	\caption{An algorithm for computing admissible $\ell$}
\end{algorithm}

\newpage
\begin{algorithm}[H]\label{A:initial-theta-data}
	\DontPrintSemicolon
	\SetKwProg{Fn}{def}{\string:}{}
	\SetKwData{jzero}{j0}
	\SetKwData{Fone}{F1}
	\SetKwData{Ftwo}{F2}
	\SetKwData{Ezero}{E0}
	\SetKwData{Eq}{Eq}
	\SetKwData{Eone}{E1}
	\SetKwData{l}{l}
	\SetKwData{Nzero}{N0}
	\SetKwData{Primes}{primes}
	\SetKwData{FoundEll}{found\_ell}
	\SetKwData{Deltazero}{Delta0}
	\SetKwData{badPlaces}{bad\_places}
	\SetKwData{True}{True}
	\SetKwData{False}{False}
	\SetKwData{Break}{Break}
	\SetKwData{cc}{c}
	\SetKwFunction{J}{j\_invariant}
	\SetKwFunction{getThetaData}{get\_theta\_data}
	\SetKwFunction{BaseChange}{base\_change}
	\SetKwFunction{NumberField}{NumberField}
	\SetKwFunction{FixBreak}{FixBreak}
	\SetKwFunction{IsSijsling}{is\_sijsling}
	\SetKwFunction{IsSemistable}{is\_semistable}
	\SetKwFunction{NextPrime}{next\_prime}
	\SetKwFunction{Conductor}{conductor}
	\SetKwFunction{PrimeDivisors}{prime\_divisors}
	\SetKwFunction{MinimalDiscriminant}{minimal\_discriminant}
	\SetKwFunction{TateCurve}{tate\_curve}
	\SetKwFunction{Parameter}{parameter}
	\SetKwFunction{Valuation}{valuation}  
	\SetKwFunction{GaloisRepresentation}{galois\_representation}  
	\SetKwFunction{IsSurjective}{is\_surjective}  
	\SetKwFunction{TorsionField}{torsion\_field}  
	\SetKwFunction{GetGlobalMultiplicativeSubspace}{get\_global\_multiplicative\_subspace}
	\SetKwFunction{GetEll}{get\_l}

	\Fn{\getThetaData{$E_0$}}{
		
		\KwInput{$E_0$ an elliptic curve Sage object over $\QQ$.}
		\KwOutput{$\ell$ a prime, $F_2$ a field object, $\epsu$ a torsion point,
			$\Mu$ a subgroup of $\ell$-torsion of $E_0$,
			$\Vu$ a collection of places; \False if $E_0$ is not admissible. } 
		$j_0$ = $E_0$.\J{}\\
		$F_1$.$\langle a \rangle$ = \NumberField($x^2+1$)\\
		$E_1$ = $E_0$.\BaseChange{$F_1$}

		\If{ \IsSijsling{$j_0$}}{
			\Return{\False}
		}
		
		\If{ not $E_1$.\IsSemistable{}}{
			\Return{\False}
		}
		
		$N_0$ \colonequals $E_0$.\Conductor{}\\
		$\ell$ \colonequals \GetEll{$E_0$,$N_0$}
		
		\If{\FoundEll}{
			$F_2$.$\langle c \rangle$ \colonequals $E_1$.\TorsionField{$30$} \\
			$\Vu,\Mu,\epsu$ \colonequals \GetGlobalMultiplicativeSubspace{$E_0$,$\ell$} 
			
			\Return{$F_2$,$\ell$,$\Vu$,$\Mu$,$\epsu$}

		}
		
	}
	\caption{An algorithm for computing initial theta data}
\end{algorithm}

\bibliographystyle{amsalpha}
\bibliography{IUTv3.bib}

\appendix

\section{Acknowledgements}
This article is very much indebted to  \cite{Fesenko2015,Hoshi2018, Kedlaya2015,Hoshi2015,Stix2015,Mok2015,Mochizuki2017,Yamashita2017, Hoshi2017,Tan2018,SS}. 
We would like to thank John Cremona, John Voight, Will Chen, and Jereon Sijsling for productive conversations about Fuchsian groups. 
We would also like to thank Jeff Lagarias for many suggestions that have improved this manuscript.
We would like to thank David Zureick-Brown for suggesting the application of \textsf{OpenImage} for computing admissible $\ell$.

The authors also benefitted from the existence of the following workshops: the 2015 Oxford workshop funded by the Clay Mathematics Institute and the EPSRC programme grant \emph{Symmetries and Correspondences}; the 2017 Kyoto \emph{IUT Summit} workshop funded by RIMS and EPSRC; the Vermont workshop in 2017 funded by the NSF DMS-1519977 and \emph{Symmetries and Correspondences} entitled \emph{Kummer Classes and Anabelian Geometry}; the 2018 Vermont Workshop on \emph{Witt Vectors, Deformations and Absolute Geometry} funded by NSF DMS-1801012. 

The first author was supported by the European Research Council under the European Unions Seventh Framework
Programme (FP7/2007-2013) / ERC Grant agreement no. 291111/ MODAG; both authors were supported by NSF DMS-2401570.

The research discussed in the present paper profited from the generous support of the International Joint Usage/Research Center (iJU/RC) located at Kyoto Universities Research Institute for Mathematical Sciences (RIMS) as well as the Preparatory Center for Research in Next-Generation Geometry located at RIMS.
\end{document}